\crefname{lem}{Lemma}{Lemmas}
\crefname{thm}{Theorem}{Theorems}
\crefname{cor}{Corollary}{Corollaries}
\crefname{prop}{Proposition}{Propositions}
\crefname{conj}{Conjecture}{Conjectures}
\crefname{openproblem}{Open Problem}{Open Problems}
\setlist[itemize]{topsep=0ex,itemsep=0ex,parsep=0.4ex}
\setlist[enumerate]{topsep=0ex,itemsep=0ex,parsep=0.4ex}
\newcommand{\defn}[1]{\textcolor{Maroon}{\emph{#1}}}
\newcommand{\PP}{\mathcal{P}}
\newcommand{\NN}{\mathbb{N}}
\def\NAT@spacechar{~}
\renewcommand{\le}{\leqslant}
\renewcommand{\geq}{\geqslant}
\renewcommand{\leq}{\leqslant}
\DeclareMathOperator{\dist}{dist}
\DeclareMathOperator{\tw}{tw}
\DeclareMathOperator{\stw}{stw}
\renewcommand{\thefootnote}{\fnsymbol{footnote}}
\theoremstyle{plain}
\newtheorem{thm}{Theorem}
\newtheorem{lem}[thm]{Lemma}
\newtheorem{cor}[thm]{Corollary}
\theoremstyle{definition}
\begin{document}
	
\author{Torsten Ueckerdt\,\footnotemark[2] \qquad David~R.~Wood\,\footnotemark[5] \qquad Wendy Yi\,\footnotemark[2] }
	
\footnotetext[2]{Institute of Theoretical Informatics, Karlsruhe Institute of Technology, Germany (\texttt{torsten.ueckerdt@kit.edu}).}
	
\footnotetext[5]{School of Mathematics, Monash   University, Melbourne, Australia  (\texttt{david.wood@monash.edu}). Research supported by the Australian Research Council.}
	
\sloppy
	
\title{\textbf{An improved planar graph product structure theorem}}
	
\maketitle
	
\begin{abstract}
Dujmovi{\'c}, Joret, Micek, Morin, Ueckerdt and Wood~[J.~ACM 2020] proved that for every planar graph $G$ there is a graph $H$ with treewidth at most 8 and a path $P$ such that $G\subseteq H\boxtimes P$. We improve this result by  replacing ``treewidth at most 8'' by ``simple treewidth at most 6''.
\end{abstract}
	
\renewcommand{\thefootnote}{\arabic{footnote}}

\section{Introduction}

This paper is motivated by the following question: what is the global structure of planar graphs? Recently, \citet{DJMMUW20} gave an answer to this question that describes planar graphs in terms of products of simpler graphs, in particular, graphs of bounded treewidth. In this note, we improve this result in two respects. To describe the result from \citep{DJMMUW20}  and our improvement, we need the following definitions. 

A \defn{tree-decomposition} of a graph $G$ is a collection $(B_x\subseteq V(G):x\in V(T))$ of subsets of $V(G)$ (called \defn{bags}) indexed by the nodes of a tree $T$, such that:
\begin{enumerate}[label=(\alph*)]
	\item for every edge $uv\in E(G)$, some bag $B_x$ contains both $u$ and $v$, and 
	\item for every vertex $v\in V(G)$, the set $\{x\in V(T):v\in B_x\}$ induces a non-empty (connected) subtree of $T$.
\end{enumerate}
The \defn{width} of a tree decomposition is the size of the largest bag minus 1. The \defn{treewidth} of a graph $G$, denoted by $\tw(G)$, is the minimum width of a tree decomposition of $G$. These definitions are due to \citet{RS-II}. Treewidth is recognised as the most important measure of how similar a given graph is to a tree. Indeed, a connected graph with at least two vertices has treewidth 1 if and only if it is a tree. See \citep{HW17,Reed03,Bodlaender-TCS98} for surveys on treewidth.

A tree-decomposition $(B_x:x\in V(T))$ of a graph $G$ is \defn{$k$-simple}, for some $k\in\NN$,  if it has  width  at most $k$, and for every set $S$ of $k$ vertices in $G$, we have $|\{x\in V(T): S\subseteq B_x\}|\leq 2$. The \defn{simple treewidth} of a graph $G$, denoted by $\stw(G)$, is the minimum $k\in\NN$ such that $G$ has a $k$-simple tree-decomposition. Simple treewidth appears in several places in the literature under various guises \citep{KU12,KV12,MJP06,Wulf16}. The following facts are well-known: A graph has simple treewidth 1 if and only if it is a linear forest. A graph has simple treewidth at most 2 if and only if it is outerplanar. A graph has simple treewidth at most 3 if and only if it has treewidth at most 3 and is planar~\citep{KV12}. The edge-maximal graphs with simple treewidth 3 are ubiquitous objects, called  \defn{planar 3-trees} or \defn{stacked triangulations} in structural graph theory and graph drawing~\citep{AP-SJADM96,KV12}, called \defn{stacked polytopes} in polytope theory~\citep{Chen16}, and called \defn{Apollonian networks} in enumerative and random graph theory~\citep{FT14}. It is also known and easily proved that $\tw(G) \leq \stw(G)\leq \tw(G)+1$ for every graph $G$ (see \citep{KU12,Wulf16}). 

The \defn{strong product} of graphs $A$ and $B$, denoted by $A\boxtimes B$, is the graph with vertex set $V(A)\times V(B)$, where distinct vertices $(v,x),(w,y)\in V(A\boxtimes B)$ are adjacent if 
(1) $v=w$ and $xy\in E(B)$, or 
(2) $x=y$ and $vw\in E(A)$, or  
(3) $vw\in E(A)$ and $xy\in E(B)$. 

\citet{DJMMUW20} proved the following theorem describing the global structure of planar graphs. 

\begin{thm}[\citep{DJMMUW20}]
\label{PlanarStructure}
Every planar graph $G$ is isomorphic to a subgraph of $H\boxtimes P$, for some planar graph $H$ with treewidth at most 8 and some path $P$.
\end{thm}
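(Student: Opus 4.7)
The plan is to combine a BFS layering of $G$ (which will give the path $P$) with a partition of $V(G)$ into vertical tree-paths (which will give $H$ as the quotient graph), and use planarity to bound the treewidth of $H$. First, we may assume $G$ is a plane triangulation, since adding edges only strengthens the subgraph containment statement. Fix a root vertex $r$ and let $T$ be a BFS spanning tree of $G$ rooted at $r$, with BFS layers $V_0, V_1, \ldots, V_d$. Let $P$ be the path on $\{0,1,\ldots,d\}$ with edges between consecutive integers.

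The central step is to construct a partition $\mathcal{P}$ of $V(G)$ into \emph{vertical paths} of $T$---chains in which each vertex is the parent of the next---such that the quotient graph $H := G/\mathcal{P}$ has treewidth at most $8$. Given such a partition, the map $\phi(v) := (\sigma(v), \ell(v))$, where $\sigma(v)$ is the part of $\mathcal{P}$ containing $v$ and $\ell(v)$ is its BFS layer, embeds $G$ as a subgraph of $H \boxtimes P$: every edge of $G$ either stays inside a BFS layer (handled by an $H$-edge combined with equality on $P$), or jumps between consecutive layers (handled by a $P$-edge when both endpoints lie in the same vertical path, or a diagonal edge of the strong product otherwise). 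Thus proving the embedding reduces to finding a vertical-path partition with a treewidth-$8$ quotient.

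To construct $\mathcal{P}$, I would proceed recursively via a \emph{tripod} decomposition of the plane triangulation. At each stage the current region is a topological disk whose boundary is covered by $O(1)$ tripods---bundles of three vertical $T$-paths emanating from a common ancestor. One selects a separating triangle inside the disk and adjoins the three vertical paths joining its vertices to the existing tripod roots as new tripods, then recurses on each subregion. This construction simultaneously yields a tree decomposition of $H$ whose bags consist of the parts meeting each recursive region. The main obstacle will be to show that, with a careful choice of separating triangle at each step, every region's boundary can be covered by at most three tripods, so every bag contains at most $9$ parts, yielding treewidth at most $8$. Maintaining this planar-geometric invariant across the recursion is the delicate technical core of the argument, and is where the constant $8$ emerges.
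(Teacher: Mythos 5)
Your overall strategy is exactly the one of \citep{DJMMUW20} that this paper refines: the reduction to finding a vertical-path partition $\PP$ with $\tw(G/\PP)\le 8$ is \cref{MakeProduct}, and the recursive decomposition of a triangulated disk whose boundary is covered by a bounded number of vertical paths is the key lemma (\cref{NearTriang} here, Lemma~13 in \citep{DJMMUW20}). The genuine gap is precisely the step you defer as ``the delicate technical core'': you assert that a triangle can be ``carefully chosen'' so that the three new vertical paths keep every subregion's boundary within budget, but you give no mechanism for finding it, and this existence statement is where the whole proof lives. The mechanism is Sperner's Lemma. Concretely: the current region is bounded by a cycle $F=[P_1,\dots,P_k]$ made of $k\le 6$ vertical paths; group them into three blocks $R_1,R_2,R_3$ of at most two consecutive paths each; colour each boundary vertex by the index of its block, and colour each interior vertex $v$ by the block containing the first vertex of $F$ on the $T$-path from $v$ to the root. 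Sperner's Lemma yields an internal face $\tau=v_1v_2v_3$ receiving all three colours; the vertical paths $Q_1,Q_2,Q_3$ from the $v_i$ down to $F$ are pairwise disjoint (each is monochromatic) and land in three distinct blocks, so each of the three subregions is bounded by $Q_i'$, $Q_{i+1}'$ and portions of at most two blocks, i.e.\ by at most $1+2+2+1=6$ vertical paths. The invariant is restored and the recursion closes. Note also that $\tau$ need not be a separating triangle of the triangulation; it is $F\cup\tau\cup Q_1\cup Q_2\cup Q_3$ that does the separating.

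A second, smaller problem is your arithmetic. If the invariant were ``boundary covered by at most three tripods'', the boundary could contain up to $9$ vertical paths, and the bag for a region must contain those boundary parts \emph{plus} the three legs $Q_1',Q_2',Q_3'$ of the new tripod, giving $12$ parts and treewidth $11$, not $8$. The correct invariant is that the boundary consists of at most $6$ vertical paths (equivalently, at most two legs of each of at most three tripods), so each bag has at most $6+3=9$ parts. With Sperner's Lemma and this corrected count supplied, your outline becomes the proof of \cref{PlanarStructure}; the present paper's improvement to simple treewidth $6$ then comes from a finer grouping of the boundary paths (at most $5$ of them) together with the planarity argument that excludes one adjacency when $k=5$, and a modified tree-decomposition in that case.
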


\cref{PlanarStructure} has been used to solve several open problems regarding queue layouts~\citep{DJMMUW20}, non-repetitive colourings~\citep{DEJWW20}, centered colourings~\citep{DFMS20}, clustered colourings~\citep{DEMWW}, adjacency labellings~\citep{BGP20,DEJGMM,EJM}, and vertex rankings~\citep{BDJM}.

We modify the proof of \cref{PlanarStructure} to establish the following.

\begin{thm}
\label{PlanarStructure6}
Every planar graph $G$ is isomorphic to a subgraph of $H\boxtimes P$, for some planar graph $H$ with simple treewidth at most 6 and some path $P$.
\end{thm}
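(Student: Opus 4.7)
The plan is to follow the proof strategy of \cref{PlanarStructure} from \citep{DJMMUW20}, and refine the construction of the tree-decomposition of the quotient graph so that it has both smaller bags and the additional simplicity property required by simple treewidth.

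First, I would reduce to the case that $G$ is a planar triangulation, since any planar graph embeds in a triangulation $G'$, and $G\subseteq G'\subseteq H\boxtimes P$. So assume $G$ is a plane triangulation, choose a vertex $r$ on the outer face, and take a BFS spanning tree $T$ rooted at $r$ with layers $V_0,V_1,\ldots$; the path $P$ will be the natural path with vertex set $\{0,1,2,\ldots\}$ and edges between consecutive integers. Next, apply the recursive tripod decomposition of \citep{DJMMUW20}: at each step, select a triangular face bounding the current subproblem whose three vertices lie on three vertical paths of $T$; the resulting tripod separates the subproblem into smaller pieces. Add the three vertical paths of the tripod to the partition $\mathcal{P}$ and recurse on each piece. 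Setting $H:=G/\mathcal{P}$, the map $v\mapsto(\mathrm{part}(v),\mathrm{layer}(v))$ exhibits $G\subseteq H\boxtimes P$, and $H$ is planar as a minor of $G$.

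The main task is to build, alongside this recursion, a tree-decomposition of $H$ whose tree mirrors the recursion and which has each bag of size at most $7$ (giving width at most $6$) while every $6$-vertex subset of $V(H)$ is contained in at most $2$ bags. The natural bag at a recursive node consists of the $H$-images of the tripods bounding that node's subproblem. The DJMMUW analysis bounds this by three tripods of three vertices each, giving $9$; I would refine this count by observing that consecutive tripods in the recursion share an apex vertex, and that after quotienting by $\mathcal{P}$ several of these contracted paths coincide in $H$, which trims the bag size down to $7$.

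The principal technical obstacle will be establishing the $6$-simple property. If some $6$-vertex subset $S\subseteq V(H)$ were contained in three or more bags, the connected subtree of bags containing $S$ would force $S$ to persist through three distinct recursive subproblems. I expect to rule this out with a planarity argument: three subproblems in the tripod decomposition sharing a $6$-vertex boundary would yield a subconfiguration in $G$ incompatible either with planarity or with the monotonicity of BFS layers along tripod paths. Making this precise, and verifying the base cases of the recursion where the subproblem is a small triangulated disc, will be the most delicate part of the argument.
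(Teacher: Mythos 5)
Your framework matches the paper's: reduce to a triangulation, take a BFS spanning tree, run the Sperner/tripod recursion of \citep{DJMMUW20} to partition $G$ into vertical paths, and build a tree-decomposition of $H=G/\PP$ whose tree mirrors the recursion. But the two ideas that actually bring the width down to $6$ are absent, and the mechanism you propose in their place does not work. In the recursion each subproblem is bounded by a cycle $F$ consisting of $k$ vertical paths, and the natural root bag consists of those $k$ paths together with the three new tripod legs $Q_1',Q_2',Q_3'$, hence has size $k+3$. In \citep{DJMMUW20} one only gets $k\le 6$, giving bags of size $9$. The paper's first new idea is a planarity argument at the level of the colour classes: contracting the $k$ colour classes yields a planar graph, which cannot contain $K_5$, so when $k=5$ one may assume $c_2c_5$ is a non-edge; this is exactly what guarantees that each child cycle $F_i$ decomposes into at most $5$ (rather than $6$) vertical paths, so $k\le 5$ propagates through the recursion and bags have size at most $k+3\le 8$. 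Your suggested mechanism --- that bags shrink to $7$ because consecutive tripods share an apex or because several contracted paths coincide in $H$ --- is not correct: the paths $P_1,\dots,P_k,Q_1',Q_2',Q_3'$ are distinct parts of $\PP$ and remain distinct vertices of $H$, so the root bag genuinely has $k+3$ elements and no such coincidence occurs.

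The second missing idea handles the residual case $k=5$, where $|B_u|=8$ still exceeds width $6$. The paper replaces $B_u$ by two adjacent bags of size $7$: one containing $P_1,\dots,P_4,Q_1',Q_2',Q_3'$ and one containing $P_1,\dots,P_5,Q_1',Q_3'$. This split preserves the edge-coverage condition only because the same $c_2c_5$ non-edge implies that $H$ has no edge between the vertices corresponding to $Q_2'$ and $P_5$; without that observation the split is not available. Finally, the $6$-simplicity does not require the three-subproblem planarity argument you sketch (which you would find hard to make precise): it follows directly from the fact that the root bag meets each child's attachment bag $B_{u_i}$ in at most the $5$ vertices corresponding to the paths forming $F_i$, so no $6$-element set contained in the root bag can also lie in a bag of a child's subtree, and induction on the children finishes the count. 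As written, your proposal establishes the overall architecture but not the content of \cref{NearTriang}, which is where the theorem is actually proved.
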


\cref{PlanarStructure6} improves upon \cref{PlanarStructure} in two respects. First it is for simple treewidth (although it should be said that the proof of \cref{PlanarStructure} gives the analogous result for simple treewidth 8). The main improvement is to replace 8 by 6, which does require new ideas. The proof of \cref{PlanarStructure6} builds heavily on the proof of \cref{PlanarStructure}, which in turn builds on a result of \citet{PS18}, who showed that every planar graph has a partition into geodesic paths whose contraction gives a graph with treewidth at most 8. 

\section{Proof of \cref{PlanarStructure6}}

Our goal is to find a given planar graph $G$ as a subgraph of $H \boxtimes P$ for some graph $H$ of small treewidth and path $P$. \citet{DJMMUW20} showed this can be done by partitioning the vertices of $G$ into so-called vertical paths in a BFS spanning tree so that contracting each path into a single vertex gives the graph $H$ (see \cref{MakeProduct} and \cref{VerticalPathsProduct} below). 

To formalise this idea, we need the following terminology and notation. A \defn{partition} $\PP$ of a graph $G$ is a set of connected subgraphs of $G$, such that each vertex of $G$ is in exactly one subgraph in $\PP$. The \defn{quotient} of $\PP$, denoted $G/\PP$, is the graph with vertex set $\PP$, where distinct elements $A,B\in \PP$ are adjacent in $G/\PP$ if there is an edge of $G$ with endpoints in $A$ and $B$. Note that $G/\PP$ is a minor of $G$, so if $G$ is planar then $G/\PP$ is planar. 

If $T$ is a tree rooted at a vertex $r$, then a non-empty path $(x_0,\dots,x_p)$ in $T$ is \defn{vertical} if for some $d\geq 0$ for all $i\in[0,p]$ we have $\dist_T(x_i,r)=d+i$. 

\begin{lem}[\citep{DJMMUW20}]
\label{MakeProduct}
Let $T$ be a BFS spanning tree in a connected graph $G$. 
Let $\PP$ be a partition of $G$ into vertical paths in $T$. 
Then $G$ is isomorphic to a subgraph of $(G/\PP)\boxtimes P$, for some path $P$. 
\end{lem}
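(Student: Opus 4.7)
The plan is to construct an explicit injective map $\phi\colon V(G) \to V((G/\PP)\boxtimes P)$ for a suitably long path $P$, and to verify that it sends edges of $G$ to edges of $(G/\PP)\boxtimes P$. Let $r$ be the root of the BFS tree $T$, and for each $v \in V(G)$ let $Q(v) \in \PP$ be the unique vertical path containing $v$, and let $d(v) := \dist_T(v, r)$ be the BFS depth of $v$. Let $D$ be the maximum BFS depth in $T$, let $P$ be the path on vertex set $\{0, 1, \dots, D\}$ with the natural edges, and define $\phi(v) := (Q(v), d(v))$.

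For injectivity, the key observation is that by the definition of ``vertical'' each vertical path visits each BFS level at most once; hence if $\phi(u) = \phi(v)$ then $u$ and $v$ belong to the same part of $\PP$ at the same depth, forcing $u = v$.

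For edge preservation, consider any $uv \in E(G)$. If $Q(u) = Q(v)$ then $u \neq v$ forces $d(u) \neq d(v)$, so $d(u)$ and $d(v)$ are consecutive in $P$ and $\phi(u)\phi(v)$ is an edge of $(G/\PP)\boxtimes P$ of type~(1). If $Q(u) \neq Q(v)$ then $Q(u)Q(v) \in E(G/\PP)$ by the definition of the quotient, and $\phi(u)\phi(v)$ is an edge of the strong product of type~(2) or~(3). The only substantive step, and hence the main obstacle (although a very mild one), is showing that $|d(u) - d(v)| \leq 1$ for every edge $uv \in E(G)$; this is the classical BFS property and is the only place where the hypothesis that $T$ is a BFS tree (rather than an arbitrary rooted spanning tree) is used. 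Everything else is an unpacking of the definitions of strong product, quotient, and vertical path.
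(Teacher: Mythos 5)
Your proof is correct and is essentially the standard argument for this lemma (which the paper cites from \citep{DJMMUW20} without reproving): map $v\mapsto(Q(v),\dist_T(v,r))$, use the fact that a vertical path meets each BFS level at most once for injectivity, and use the BFS property $|\dist_T(u,r)-\dist_T(v,r)|\le 1$ for every edge $uv$ to check that all three edge types of the strong product are covered.
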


\begin{figure}[!h]
\centering
\includegraphics[width=\textwidth]{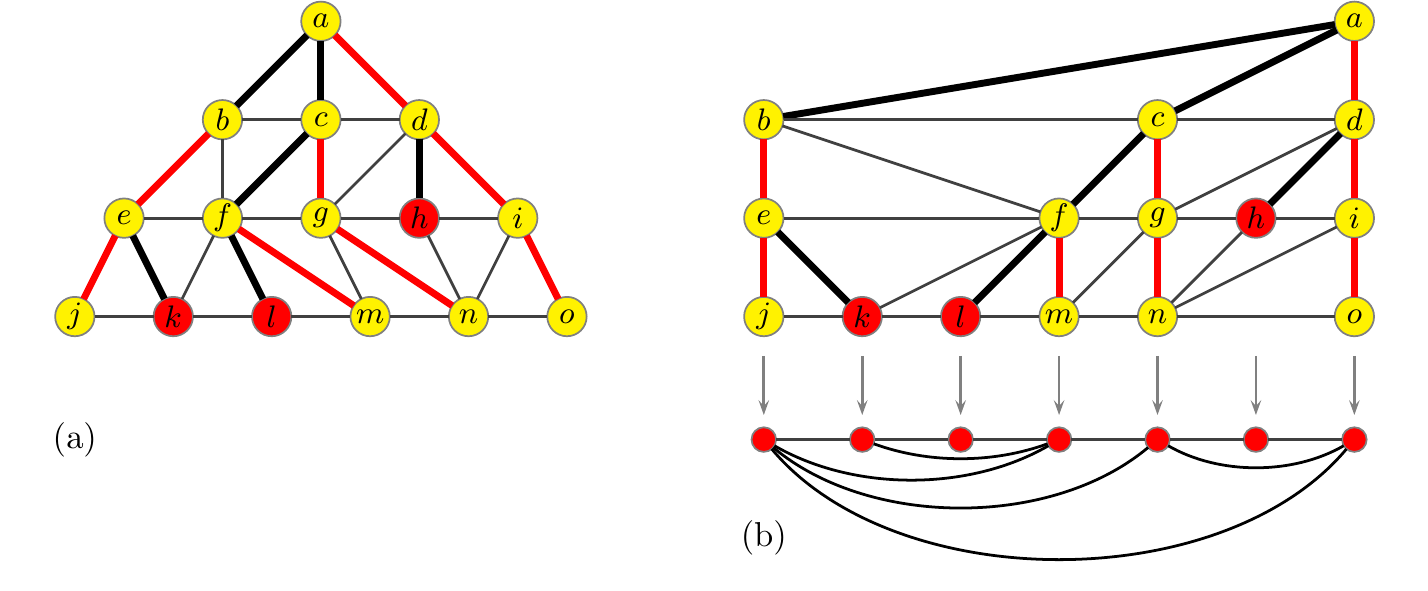}    \caption{(a) A partition $\PP$ of a planar graph $G$ into red vertical paths in a BFS spanning tree. (b) Illustration of $G$ as a subgraph of $(G/\PP)\boxtimes P$.}
\label{VerticalPathsProduct}
\end{figure}

The heart of this paper is \cref{NearTriang} below, which is an improved version of the key lemma from \citep{DJMMUW20}. The statement of \cref{NearTriang} is identical to Lemma~13 from \citep{DJMMUW20}, except that we require $F$ to be partitioned into at most $5$ instead of $6$ paths and that the tree-decomposition of $H$ is 6-simple. 

For a cycle $C$, we write $C=[P_1,\dots,P_k]$ if $P_1,\dots,P_k$ are pairwise disjoint non-empty paths in $C$, and the endpoints of each path $P_i$ can be labelled $x_i$ and $y_i$ so that $y_ix_{i+1} \in E(C)$ for $i\in[k]$, where $x_{k+1}$ means $x_1$. This implies that $V(C)=\bigcup_{i=1}^k V(P_i)$. 

The proof of \cref{NearTriang} employs the following well-known variation of Sperner's Lemma (see~\citep{Proofs4})

\begin{lem}[Sperner's Lemma]
\label{Sperner}
Let $G$ be a near-triangulation whose vertices are coloured $1,2,3$, with the outerface $F=[P_1,P_2,P_3]$ where each vertex in $P_i$ is coloured $i$. Then $G$ contains an internal face whose vertices are coloured $1,2,3$. 
\end{lem}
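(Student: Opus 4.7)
The plan is to use the standard parity/door-counting argument, adapted to the setup where the outerface boundary has the specific structure $F=[P_1,P_2,P_3]$. Call an edge of $G$ a \emph{door} if its endpoints are coloured $1$ and $2$. I would like to conclude that some internal (triangular) face has an odd number of doors on its boundary, since a triangle with colours in $\{1,2,3\}$ contains an odd number of doors precisely when it is a rainbow $(1,2,3)$-triangle: a direct case check on the possible colour multisets $(a,b,c)\in\{1,2,3\}^3$ shows that the triangles with exactly one door are the rainbow ones, while all other triangles have $0$ or $2$ doors.

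The first step is to count the doors on the outer boundary. Since every vertex of $P_i$ has colour $i$, no edge internal to any $P_i$ is a door. Among the three ``transition'' edges $y_iy_{i+1}$ (really $y_ix_{i+1}$ in the notation of $C=[P_1,P_2,P_3]$) joining consecutive paths, exactly one—the one between $P_1$ and $P_2$—has endpoints coloured $1$ and $2$. Hence the outer boundary of $G$ contains exactly one door.

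The second step is the parity argument. Sum, over all internal faces $f$ of $G$, the number of doors on $\partial f$. Every door edge that is interior to $G$ (i.e.\ not on the outerface) lies on the boundary of exactly two internal faces, while every door edge on the outerface lies on the boundary of exactly one internal face. Therefore
\begin{equation*}
\sum_{f \text{ internal}} \bigl|\{\text{doors on }\partial f\}\bigr|
\;=\; 2\cdot(\text{interior doors}) + (\text{boundary doors})
\;\equiv\; 1 \pmod{2}.
\end{equation*}
In particular the sum is odd, so at least one internal face $f$ has an odd number of doors on its boundary. By the case analysis above, this $f$ must be a $(1,2,3)$-coloured triangle, proving the lemma.

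There is essentially no obstacle: both steps are routine and independent of the precise geometry of the near-triangulation, relying only on the fact that every interior face is a triangle and that the outerface boundary $C=[P_1,P_2,P_3]$ produces exactly one transition from colour $1$ to colour $2$. The only point requiring care is the definition of the outerface bracketing, to make sure exactly one of the three transition edges is a door.
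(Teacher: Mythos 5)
Your proof is correct and complete. Note that the paper does not actually prove this lemma; it is stated as a well-known variation of Sperner's Lemma and cited from the literature, so there is no in-paper argument to compare against. What you give is the standard parity (``door-counting'') proof, and all the details check out for the setting at hand: the case analysis confirming that a triangle carries an odd number of $\{1,2\}$-coloured edges exactly when it is rainbow, the observation that the boundary cycle $[P_1,P_2,P_3]$ contributes exactly one door (the transition edge between $P_1$ and $P_2$, the other two transitions being $2$--$3$ and $3$--$1$), and the double-counting identity. The one hypothesis you use implicitly --- that every interior edge lies on exactly two internal faces and every boundary edge on exactly one --- holds here because a near-triangulation whose outerface is bounded by a cycle and all of whose internal faces are triangles is $2$-connected, which is exactly the situation in which the lemma is applied in the paper. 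So the argument is sound as written; it would only need a remark to that effect if one wanted it to cover degenerate near-triangulations with cut vertices.
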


\begin{lem}
\label{NearTriang}
Let $G^+$ be a plane triangulation, let $T$ be a spanning tree of $G^+$ rooted at some vertex $r$ on the outerface of $G^+$, and let $P_1,\ldots,P_k$ for some $k\in[5]$, be pairwise disjoint vertical paths in $T$ such that $F=[P_1,\ldots,P_k]$ is a cycle in $G^+$. Let $G$ be the near-triangulation consisting of all the edges and vertices of $G^+$ contained in $F$ and the interior of $F$. Then $G$ has a partition $\PP$ into paths in $G$ that are vertical in $T$, such that $P_1,\ldots,P_k\in\PP$ and the quotient $H:=G/\PP$ has a 6-simple tree-decomposition such that some bag contains all the vertices of $H$ corresponding to $P_1,\ldots,P_k$. 
\end{lem}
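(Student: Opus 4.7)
My plan is to induct on $|V(G)|$, following the strategy of Lemma~13 of \citet{DJMMUW20} but with tighter accounting to admit at most $5$ boundary paths and produce a $6$-simple decomposition. First, I would handle the base case $V(G) = V(F)$ by taking $\PP = \{P_1, \ldots, P_k\}$; the quotient is a cycle on $k \le 5$ vertices, decomposed by a single bag of size $\le 5$.

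For the inductive step, I would case on the chord structure of $F$. \emph{Case~1:} If $F$ has a chord $uv$ in $G$ with $u \in V(P_i)$, $v \in V(P_j)$, $i \ne j$, then $uv$ splits $G$ into near-triangulations $G_1, G_2$, each bounded by at most $k \le 5$ vertical paths (since $P_i$ and $P_j$ each split at the chord endpoint into two sub-paths, one per side). I would apply the inductive hypothesis to each, merge the split sub-paths back into $P_i, P_j$, and glue $\TT_1, \TT_2$ by introducing a new bag $B^* = \{P_1, \ldots, P_k\}$ of size $k \le 5$ adjacent to the renamed boundary bags $B_1^*, B_2^*$. \emph{Case~2:} If $F$ has a chord $uv$ with both $u, v \in V(P_i)$, let $Q$ be the sub-path of $P_i$ between them and $G_s$ the near-triangulation bounded by $Q \cup \{uv\}$. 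I would apply induction to $G_s$ with $k' = 1$, then delete $\mathrm{int}(G_s)$ and triangulate the resulting face by a fan from $u$ to produce a strictly smaller near-triangulation $G_l^+$ with boundary $[P_1, \ldots, P_k]$, and apply induction to $G_l^+$; finally merge by absorbing $V(Q)$ into $P_i \in \PP_l$ and gluing $\TT_s$ to $\TT_l$ at the $\TT_l$-bag containing $P_i$, after renaming $Q \to P_i$ throughout $\TT_s$. \emph{Case~3:} If $F$ has no chord, I would partition $\{P_1, \ldots, P_k\}$ into three colour classes on three consecutive cyclic arcs of $F$, colour each interior vertex by the class of the first boundary path met by its tree-path toward $r$, and apply \cref{Sperner} to find an internal triangular face $\{x, y, z\}$ whose vertices lie on three distinct boundary paths. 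The edges of this triangle split $G$ into three sub-near-triangulations, each with at most $5$ boundary paths; I would recurse and glue at a new bag $B^* = \{P_1, \ldots, P_k\}$.

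In each case, the combined tree-decomposition would have a distinguished bag containing $\{P_1, \ldots, P_k\}$: in Cases~1 and~3 the new bag $B^*$ of size $\le 5$; in Case~2 the boundary bag of $\TT_l$ of size $\le 7$. All other bags have size $\le 7$ by induction, giving width at most $6$. The $6$-simple property would be inherited as follows: in Cases~1 and~3, any $6$-subset $S$ satisfies $S \not\subseteq B^*$ (since $|B^*| \le 5 < 6$), so $S$ contains a vertex private to one recursive sub-tree, confining all bags containing $S$ to that sub-tree (hence at most $2$ by induction); in Case~2, subsets $S \subseteq V(H_l)$ are controlled by $\TT_l$'s $6$-simplicity, and subsets meeting $V(H_s) \setminus V(H_l)$ by $\TT_s$'s, noting $V(H_l) \cap V(H_s) = \{P_i\}$ so no $6$-subset can lie in bags from both sides.

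The main obstacle will be Case~2: I will need to justify recursion on the fan-triangulated $G_l^+$ despite its new edges lying outside the original $G^+$ (which requires re-interpreting the ambient plane triangulation), argue that the restriction of $T$ to $V(G_l^+)$ still realises $P_1, \ldots, P_k$ as vertical paths, and verify that renaming $Q$ to $P_i$ in $\TT_s$ does not create a $6$-subset appearing in $3$ or more bags across the glue.
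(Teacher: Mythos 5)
Your Case~3 is the only case the paper's argument actually needs (the Sperner step works whether or not $F$ has chords), and it is precisely there that your proposal has two genuine gaps, which together constitute the entire content of the improvement from $8$ to $6$. First, the triangular face $\tau=v_1v_2v_3$ does not by itself cut $G$ into regions bounded by vertical paths: you must also take, for each $i$, the vertical path $Q_i$ in $T$ from $v_i$ to its first ancestor $v_i'$ on $F$, and the paths $Q_i'=Q_i-v_i'$ become new parts of the partition. Once you do, each region is bounded by $Q_i'$, $Q_{i+1}'$ and the two arcs of $F$ running from $v_i'$ and $v_{i+1}'$ to the common corner of the colour classes; for $k=5$ such an arc can consist of \emph{two} of the $P_j$, so a region can a priori have six boundary paths, and your unargued claim of ``at most $5$'' is exactly what must be proved. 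The paper achieves it by fixing the grouping $R_1=P_1$, $R_2=[P_2,P_3]$, $R_3=[P_4,P_5]$ and noting that the quotient of the five colour classes is planar, hence not $K_5$, so after relabelling the classes of $P_2$ and $P_5$ are non-adjacent; since $v_2v_3\in E(G)$ this forces $v_2'\in P_3$ or $v_3'\in P_4$, which collapses one of the two dangerous double arcs. Second, your gluing bag cannot be $\{P_1,\dots,P_k\}$ of size at most $5$: it must also contain $Q_1',Q_2',Q_3'$, because each $Q_i'$ lies on the boundary of two of the regions and connectivity of its bag-set forces it into the joining bag. That bag then has size up to $k+3=8$ when $k=5$, exceeding width $6$; the paper needs a further step, replacing it by two adjacent bags of size $7$ (one omitting $P_5$, one omitting $Q_2'$), which is only a valid tree-decomposition because $Q_2'$ and $P_5$ are non-adjacent --- again a consequence of the $K_5$ observation.

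Your Cases~1 and~2 are unnecessary, and Case~2 is independently problematic (fan-triangulating the hole leaves $G_l^+$ outside the ambient triangulation $G^+$, and the tree-paths of vertices of $G_l$ toward $r$ may dip into the deleted interior of $G_s$); but the more important point is that restricting to the chordless case buys you nothing: the six-boundary-path regions and the size-$8$ gluing bag arise there all the same. Without the two ideas above, the argument you outline only reproduces the original treewidth-$8$ bound of Dujmovi{\'c} et al., not the claimed simple-treewidth-$6$ bound.
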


\begin{proof}
The proof is by induction on $n=|V(G)|$. If $n=3$, then $G$ is a 3-cycle and $k\le 3$. The partition into vertical paths is $\PP=\{P_1,\ldots,P_k\}$.  The tree-decomposition of $H$ consists of a single bag that contains the $k\le 3$ vertices corresponding to $P_1,\ldots,P_k$. Now assume that $n>3$. 

We now set up an application of Sperner's Lemma to the near-triangulation $G$. We begin by colouring the vertices in $k \le 5$ colours. For $i \in \{1,\ldots,k\}$, colour each vertex in $P_i$ by $i$. Now, for each remaining vertex $v$ in $G$, consider the path $P_v$ from $v$ to the root of $T$. Since $r$ is on the outerface of $G^+$, $P_v$ contains at least one vertex of $F$. If the first vertex of $P_v$ that belongs to $F$ is in $P_i$, then assign the colour $i$ to $v$. The set $V_i$ of all vertices of colour $i$ induces a connected subgraph of $G$ for each $i\in\{1,\ldots,k\}$. Consider the graph $M = G / \{V_1,\ldots,V_k\}$ obtained by contracting each colour class $V_i$ into a single vertex $c_i$. Since $G$ is planar, $M$ is planar. (In fact, $M$ is outerplanar, although we will not use this property.)\ Moreover, if $k \geq 3$ then $[c_1,\ldots,c_k]$ is a cycle in $M$. Since $M \not\cong K_5$, we may assume without loss of generality that either $k \le 4$ or $k=5$ and $c_2c_5$ is not an edge in $M$; that is, no vertex coloured $2$ is adjacent to a vertex coloured $5$. 

Group consecutive paths from $P_1,\dots,P_k$ as follows:
\begin{itemize}
\item If $k=1$ then, since $F$ is a cycle, $P_1$ has at least three vertices, so $P_1=[v,P_1',w]$ for two distinct vertices $v$ and $w$.  Let $R_1:=v$, $R_2:=P_1'$ and $R_3:=w$.

\item If $k=2$ then, without loss of generality,  $P_1$ has at least two vertices, say $P_1=[v,P_1']$.  Let $R_1:=v$, $R_2:=P_1'$ and $R_3:=P_2$.

\item If $k=3$ then let $R_1:=P_1$, $R_2:=P_2$ and $R_3:=P_3$.

\item If $k=4$ then let $R_1:=P_1$, $R_2:=P_2$ and $R_3:=[P_3,P_4]$. 

\item If $k=5$ then let $R_1:=P_1$, $R_2:=[P_2,P_3]$ and $R_3:=[P_4,P_5]$. 
\end{itemize}

We now derive a $3$-colouring from the $k$-colouring above. 
For $i\in\{1,2,3\}$, colour each vertex in $R_i$ by $i$. Now, for each remaining vertex $v$ in $G$, consider again the path $P_v$ from $v$ to the root of $T$ and if the first vertex of $P_v$ that belongs to $F$ is in $R_i$, then assign the colour $i$ to $v$. 
Hence, for $k = 3$ we obtain exactly the same 3-colouring as above, while for $k \in\{4,5\}$ some pairs of colour classes from the $k$-colouring are merged into one colour class in the $3$-colouring. In each  case, we obtain a 3-colouring of $V(G)$ that satisfies the conditions of  \cref{Sperner}. Therefore there exists a triangular face $\tau=v_1v_2v_3$ of $G$ whose vertices are coloured $1,2,3$ respectively; see \cref{PlanarProof}.

\begin{figure}
\centering
\includegraphics[scale=0.75]{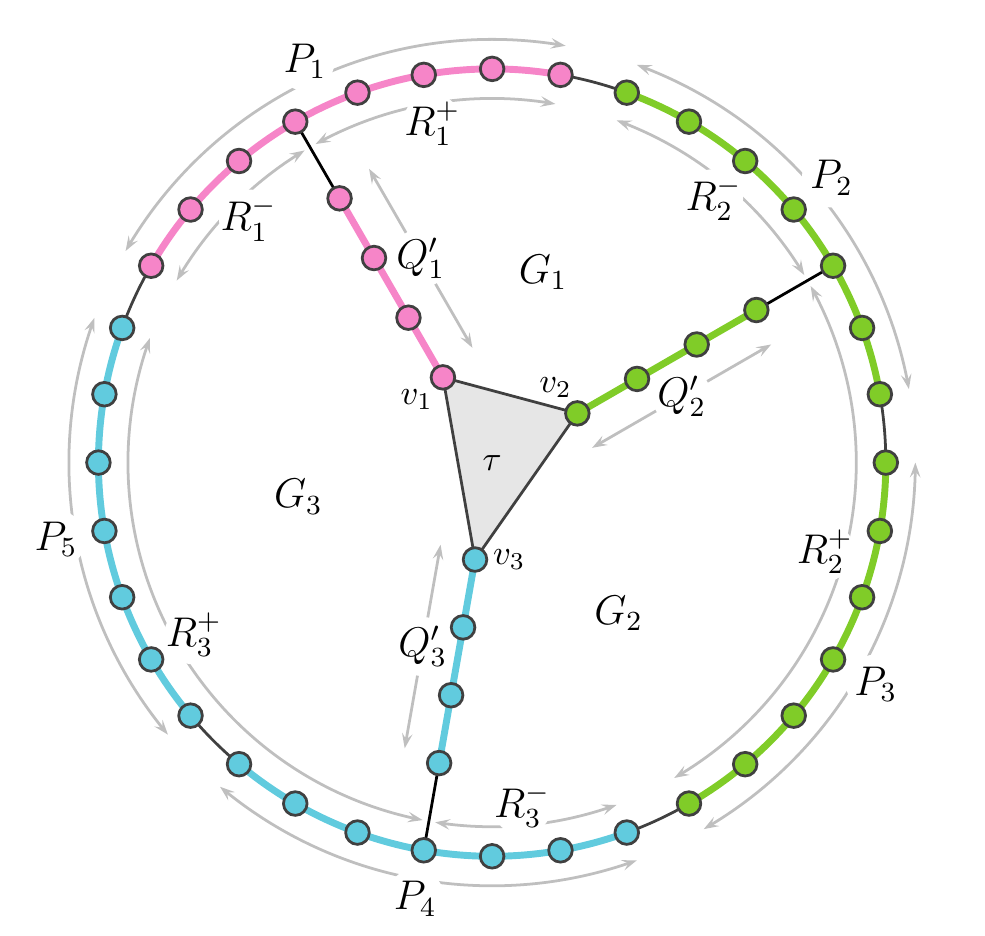}
\caption{Example of the proof of \cref{NearTriang} with $k=5$. }
\label{PlanarProof}
\end{figure}

For each $i\in\{1,2,3\}$, let $Q_i$ be the path in $T$ from $v_i$ to the first ancestor $v_i'$ of $v_i$ in $T$ that is in $F$. Observe that $Q_1$, $Q_2$, and $Q_3$ are disjoint since $Q_i$ consists only of vertices coloured $i$. Note that $Q_i$ may consist of the single vertex $v_i=v_i'$.  Let $Q_i'$ be $Q_i$ minus its final vertex $v_i'$.  Imagine for a moment that the cycle $F$ is oriented clockwise, which defines an orientation of $R_1$, $R_2$ and $R_3$. Let $R_i^-$ be the subpath of $R_i$ that contains $v'_i$ and all vertices that precede it, and let $R_i^+$ be the subpath of $R_i$ that contains $v'_i$ and all vertices that succeed it.  

Consider the subgraph of $G$ that consists of the edges and vertices of $F$, the edges and vertices of $\tau$, and the edges and vertices of $Q_1\cup Q_2\cup Q_3$. 
This graph has an outerface, an inner face $\tau$, and up to three more inner faces $F_1,F_2,F_3$ where $F_i=[Q_i',R_i^+,R_{i+1}^-,Q_{i+1}']$, where we use the convention that $Q_4=Q_1$ and $R_4=R_1$. Note that $F_i$ may be \defn{degenerate} in the sense that $[Q_i',R_i^+,R_{i+1}^-,Q_{i+1}']$ may consist only of a single edge $v_iv_{i+1}$.
  
Consider any non-degenerate $F_i=[Q_i',R_i^+,R_{i+1}^-,Q_{i+1}']$. Note that these four paths are pairwise disjoint, and thus $F_i$ is a cycle. If $Q_i'$ and $Q_{i+1}'$ are non-empty, then each is a vertical path in $T$. Furthermore, each of $R_i^+$ and $R_{i+1}^-$ consists of at most two vertical paths in $T$. Thus, $F_i$ is the concatenation of at most six vertical paths in $T$. 
Let $k_i$ be the actual number of (non-empty) vertical paths whose concatenation gives $F_i$. Then $k_1\leq 5$ and $k_3 \leq 5$ since $R_1^-$ and $R_1^+$ consist of only one vertical path in $T$. Also, if $k\le 4$ then $R_2^+$ consists of only one vertical path in $T$, implying $k_2 \le 5$. If $k=5$, then in our preliminary $k$-colouring no vertex coloured $2$ is adjacent to a vertex coloured $5$. Since $v_2v_3$ is an edge, this means that either $v_2'$ lies on $P_3$ or $v_3'$ lies on $P_4$ or both. In any case, at least one of $R_2^+$ and $R_3^-$ consists of only one vertical path in $T$, which again gives $k_2 \le 5$.

So $F_i$ is the concatenation of $k_i \le 5$ vertical paths in $T$ for each $i\in \{1,2,3\}$. Let $G_i$ be the near-triangulation consisting of all the edges and vertices of $G^+$ contained in $F_i$ and the interior of $F_i$.  Observe that $G_i$ contains $v_i$ and $v_{i+1}$ but not the third vertex of $\tau$. Therefore $G_i$ satisfies the conditions of the lemma and has fewer than $n$ vertices. By induction, $G_i$ has a partition $\PP_i$ into vertical paths in $T$, such that $H_i := G_i / \PP_i$ has a 6-simple tree-decomposition $(B^i_x:x\in V(J_i))$ in which some bag $B^i_{u_i}$ contains  the vertices of $H_i$ corresponding to the at most five vertical paths that form $F_i$.  Do this for each non-degenerate $F_i$. 

We now construct the desired partition $\PP$ of $G$. Initialise $\PP := \{P_1,\ldots,P_k\}$. Then add each non-empty $Q_i'$ to $\PP$. 
Now for each non-degenerate $F_i$, classify each path in $\PP_i$ as either  \defn{external} (that is, fully contained in $F_i$) or \defn{internal} (with no vertex in $F_i$). Add all the internal paths of  $\PP_i$ to $\PP$. By construction, $\PP$ partitions $V(G)$ into vertical paths in $T$ and $\PP$ contains $P_1,\ldots,P_k$.

Let $H:=G/\PP$. Next we construct a tree-decomposition of $H$. 
Let $J$ be the tree obtained from the disjoint union of $J_i$, taken over the $i\in\{1,2,3\}$ such that $F_i$ is non-degenerate, by adding one new node $u$ adjacent to each $u_i$. 
(Recall that $u_i$ is the node of $J_i$ for which the bag $B^i_{u_i}$ contains the vertices of $H_i$ corresponding to the paths that form $F_i$.)\ 
Let the bag $B_u$ contain all the vertices of $H$ corresponding to $P_1,\ldots,P_k,Q'_1,Q'_2,Q'_3$. 
For each non-degenerate $F_i$, and for each node $x\in V(J_i)$, initialise $B_x:= B^i_x$. 
Recall that vertices of $H_i$ correspond to contracted paths in $\PP_i$.
Each internal path in $\PP_i$ is in $\PP$.
Each external path $P$ in $\PP_i$ is a subpath of $P_j$ for some $j\in[k]$ or is one of $Q'_1, Q'_2, Q'_3$. 
For each such path $P$, for every $x\in V(J)$, in bag $B_x$, 
replace each instance of the vertex of $H_i$ corresponding to $P$ by the vertex of $H$ corresponding to the path among $P_1,\ldots, P_k, Q'_1,Q'_2,Q'_3$ 
that contains $P$.
This completes the description of $(B_x : x\in V(J))$. 
By construction, $|B_x|\leq k+3\leq 8$ for every $x\in V(J)$. 

First we show that for each vertex $a$ in $H$, the set $X:=\{x\in V(J) : a\in B_x\}$ forms a subtree of $J$. If $a$ corresponds to a path distinct from $P_1,\ldots,P_k,Q'_1,Q'_2,Q'_3$ then $X$ is fully contained in $J_i$ for some $i\in\{1,2,3\}$.
Thus, by induction $X$ is non-empty and connected in $J_i$, so it is in $J$.
If $a$ corresponds to $P$ which is one of the paths among $P_1,\ldots,P_k,Q'_1,Q'_2,Q'_3$ then 
$u\in X$ and whenever $X$ contains a vertex of $J_i$ it is because some external path of $\PP_i$ was replaced by $P$.
In particular, we would have $u_i \in X$ in that case. 
Again by induction each $X\cap J_i$ is connected and since $uu_i\in E(T)$, we conclude that $X$ induces a (connected) subtree of $J$.

Now we show that, for every edge $ab$ of $H$, there is a bag $B_x$ that contains $a$ and $b$. 
If $a$ and $b$ are both obtained by contracting any of $P_1,\ldots,P_k,Q'_1,Q'_2,Q'_3$, 
then $a$ and $b$ both appear in $B_u$.  
If $a$ and $b$ are both in $H_i$ for some $i\in\{1,2,3\}$, 
then some bag $B^i_x$ contains both $a$ and $b$. 
Finally, when $a$ is obtained by contracting a path $P_a$ in $G_i-V(F_i)$ and $b$ is obtained by contracting a path $P_b$ not in $G_i$, 
then  the cycle $F_i$ separates $P_a$ from $P_b$ so the edge $ab$ is not present in $H$. This concludes the proof that $(B_x:x\in V(J))$ is a tree-decomposition  of $H$. Note that $B_u$ contains the vertices of $H$ corresponding to $P_1,\dots,P_k$. 

By assumption the tree-decomposition $(B^i_x:x \in V(J_i))$ of $H_i$ is 6-simple for $i\in\{1,2,3\}$. Since $|B_u\cap B_{u_i}| \leq 5$ for each $i\in\{1,2,3\}$, the tree-decomposition $(B_x:x\in V(J))$ of $H$ is 6-simple, unless $|B_u|=8$, which only occurs if $k=5$ (since $|B_u|\leq k+3$). Now assume that $k=5$. 
%
Recall again that either $v_2'$ lies on $P_3$ or $v_3'$ lies on $P_4$ or both. 
Without loss of generality, $v_3'$ lies on $P_4$, and thus there is no edge between $Q_2'$ and $P_5$.

\begin{figure}
\centering
\includegraphics{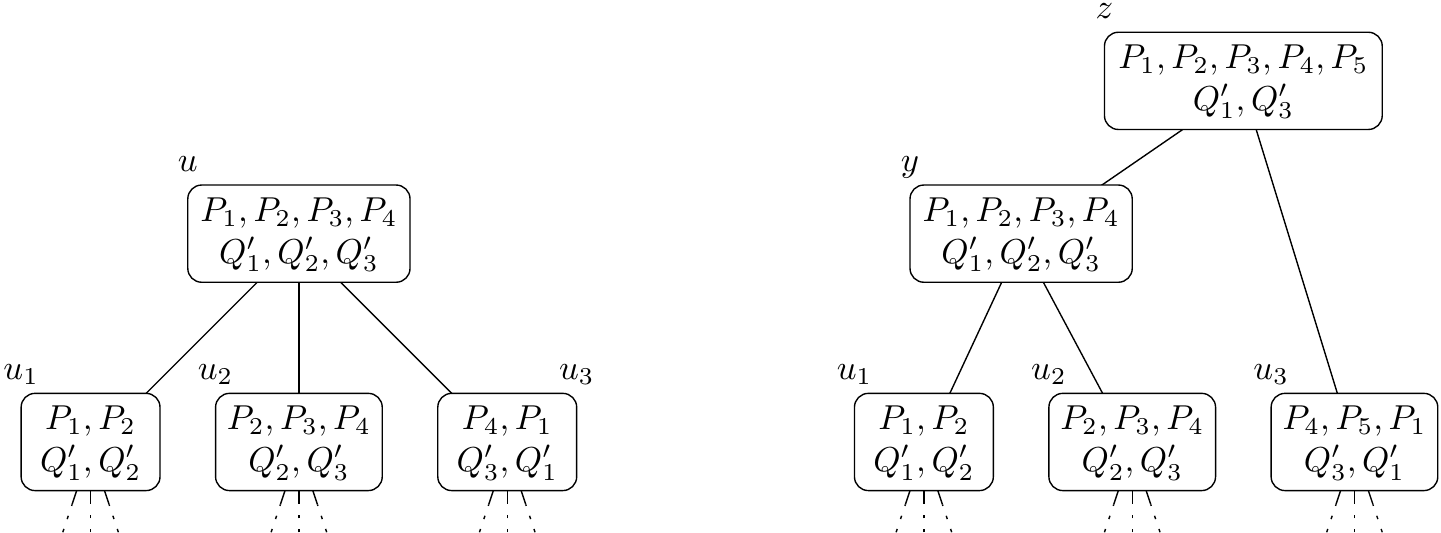}
\caption{Illustration of 6-simple tree-decomposition for a possible scenario with $k=4$ (left) and $k=5$ (right).}
\label{illustration}
\end{figure}

We now modify the above tree-decomposition of $H$ in the $k=5$ case. See \cref{illustration} for an illustration. First delete node $u$ from $J$ and the corresponding bag $B_u$. Add a new node $y$ to $J$ adjacent to $u_1$ and $u_2$, where $B_y$ consists of the vertices of $H$ corresponding to $P_1,\ldots,P_4,Q'_1,Q'_2,Q'_3$. Thus $|B_y|=7$. Add a node $z$ to $J$ adjacent to $y$ and $u_3$, where $B_z$ consists of the vertices of $H$ corresponding to $P_1,\dots,P_5,Q'_1,Q'_3$. Thus $|B_z| = 7$ and $(B_x:x\in V(J))$ is a tree-decomposition of $H$ with width 6. Since $P_5$ has no vertex in $G_1\cup G_2$, the vertex of $H$ corresponding to $P_5$ is not in $B_{u_1}\cup B_{u_2}$, and thus the nodes of $J$ whose bags contain this vertex form a connected subtree of $J$. Similarly, the vertex of $H$ corresponding to $Q'_2$ is not in $B_{u_3}$ and thus the nodes of $J$ whose bags contain this vertex form a connected subtree of $J$. The argument for the other vertices of $H$ is identical to that above. This completes the proof that $(B_x:x\in V(J))$ is a tree-decomposition of $H$ with width at most 6. It is 6-simple since the tree-decompositions of $G_1$, $G_2$ and $G_3$ are 6-simple, and $|B_y\cap B_{u_1}| \leq 5$ and $|B_y\cap B_{u_2}|\leq 5$ and $|B_z\cap B_{u_3}| \leq 5$. Moreover, $B_z$ contains the vertices of $H$ corresponding to $P_1,\dots,P_5$ as desired. 
\end{proof}

The following corollary of \cref{NearTriang} is a direct analogue of the corresponding result in \citep[Theorem~12]{DJMMUW20}.

\begin{cor}
\label{FindPartition}
Let $T$ be a rooted spanning tree in a connected planar graph $G$. Then $G$ has a partition $\PP$ into vertical paths in $T$ such that $\stw(G/\PP)\leq 6$.
\end{cor}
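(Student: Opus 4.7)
The plan is to reduce \cref{FindPartition} to \cref{NearTriang} by embedding $G$ in the plane and triangulating. Small cases are handled directly: if $|V(G)|\le 7$, take $\PP$ to consist of the singleton paths, so $G/\PP=G$ admits the single-bag tree-decomposition, giving $\stw(G/\PP)\le|V(G)|-1\le 6$. Hence assume $|V(G)|\ge 3$ (indeed $\ge 8$).

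First I would fix a planar embedding of $G$ in which the root $r$ of $T$ lies on the outer face; this is always possible by choosing any face incident to $r$ to be the outer face. Next I would augment $G$ by adding edges (without adding vertices) to a simple plane triangulation $G^+$ whose outer face is a triangle on vertex set $\{r,a,b\}$, for some $a,b\in V(G)$ originally on the outer face of the embedding. Since $V(G^+)=V(G)$, the tree $T$ remains a spanning tree of $G^+$, still rooted at $r$, and $r$ now lies on the outer face of $G^+$. I would then apply \cref{NearTriang} with $k=3$ and the (trivially) vertical single-vertex paths $P_1:=(r)$, $P_2:=(a)$, $P_3:=(b)$, whose concatenation is exactly the outer-face triangle $F=[P_1,P_2,P_3]$ of $G^+$. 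This yields a partition $\PP$ of $G^+$ into vertical paths in $T$ such that $\stw(G^+/\PP)\le 6$.

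To finish, note that each vertical path in $T$ uses only edges of $T\subseteq E(G)$, so the same $\PP$ is a partition of $G$ into vertical paths in $T$. Moreover $G/\PP$ is a spanning subgraph of $G^+/\PP$, because any edge between two classes in $G/\PP$ is witnessed by an edge of $G$, hence also of $G^+$. Simple treewidth is subgraph-monotone (any $k$-simple tree-decomposition of $G^+/\PP$ is automatically a $k$-simple tree-decomposition of the spanning subgraph $G/\PP$), so $\stw(G/\PP)\le\stw(G^+/\PP)\le 6$, as required.

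The only mildly technical point is the triangulation step: for a simple connected planar graph on at least three vertices, one can always add chords inside each non-triangular face to obtain a simple plane triangulation on the same vertex set, handling bridges and cut-vertices by judicious chord choice. Apart from this routine bookkeeping, the argument is a direct invocation of \cref{NearTriang} with the smallest allowed value $k=3$, and there is no genuine obstacle.
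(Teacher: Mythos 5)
Your proposal is correct and follows essentially the same route as the paper: embed and triangulate $G$ to a plane triangulation $G^+$ with the root $r$ on the (triangular) outer face, apply \cref{NearTriang} with $k=3$ and three singleton vertical paths, and pass from $G^+/\PP$ to its spanning subgraph $G/\PP$. The extra handling of $|V(G)|\le 7$ and the explicit remarks on subgraph-monotonicity are harmless elaborations of what the paper does in its base case and final sentence.
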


\begin{proof}
The result is trivial if $|V(G)|<3$. Now assume $|V(G)|\geq 3$. Let $r$ be the root of $T$. Let $G^+$ be a plane triangulation containing $G$ as a spanning subgraph with $r$ on the outerface of $G^+$. The three vertices on the outerface of $G^+$ are vertical (singleton) paths in $T$. Thus, $G^+$ satisfies the assumptions of \cref{NearTriang} with $k=3$ and $F$ being the outerface, which implies that $G^+$ has a partition $\PP$ into vertical paths in $T$ such that $\stw(G^+/\PP)\leq 6$. Note that $G/\PP$ is a subgraph of $G^+/\PP$. Hence $\stw(G/\PP)\leq 6$.
\end{proof}

\cref{FindPartition,MakeProduct} imply \cref{PlanarStructure6} (since we may assume that $G$ is connected). 

We conclude with an open problem. \citet{BDJMW} defined the \defn{row treewidth} of a graph $G$ to be the minimum integer $k$ such that $G$ is isomorphic to a subgraph of $H\boxtimes P$ for some graph $H$ with treewidth $k$ and for some path $P$. \cref{PlanarStructure} by \citet{DJMMUW20} says that planar graphs have row treewidth at most 8. Our \cref{PlanarStructure6} improves this upper bound to 6. \citet{DJMMUW20} proved a lower bound of 3. In fact, they showed that for every integer $\ell$ there is a planar graph $G$ such that for every graph $H$ and path $P$, if $G$ is isomorphic to a subgraph of $H\boxtimes P\boxtimes K_\ell$, then $H$ contains $K_4$ and thus has treewidth at least 3. What is the maximum row treewidth of a planar graph is a tantalising open problem.

{
\fontsize{10pt}{11pt}
\selectfont
\let\oldthebibliography=\thebibliography
\let\endoldthebibliography=\endthebibliography
\renewenvironment{thebibliography}[1]{%
\begin{oldthebibliography}{#1}%
\setlength{\parskip}{0.2ex}%
\setlength{\itemsep}{0.2ex}%
}{\end{oldthebibliography}}
\bibliographystyle{DavidNatbibStyle}
\bibliography{myBibliography}
}
\end{document}